\documentclass[11pt]{amsart}

\usepackage{amssymb}
\usepackage{latexsym}
\usepackage{amsmath}
\usepackage{amsthm}
\usepackage{amsfonts}
\usepackage{color}
\usepackage{pdfsync}
\usepackage[usenames,dvipsnames]{pstricks}
\usepackage{epsfig}
\usepackage{pst-grad} 
\usepackage{pst-plot} 

\newcommand{\Lq}{L^q(\R^n)}
\newcommand{\R}{\mathbb{R}}


\theoremstyle{plain}
\newtheorem{defi}{Definition}[section]
\newtheorem{prop}[defi]{Proposition}
\newtheorem{teo}[defi]{Theorem}
\newtheorem{cor}[defi]{Corollary}
\newtheorem{lema}[defi]{Lemma}

\theoremstyle{definition}

\theoremstyle{remark}

\numberwithin{equation}{section}

\begin{document}

\title[Fractional Decay for Zero Order Heat Equations]{Fractional Decay Bounds for Nonlocal Zero Order Heat Equations.}

\author{E. Chasseigne}
\address{
Emmanuel Chasseigne:
Laboratoire de Math\'ematiques et Physique Th\'eorique, CNRS UMR 7350,
F\'ed\'eration Denis Poisson FR2964, Universit\'e Francois Rabelais, Parc de Grandmont, 37200 Tours, France.
\newline {\tt Emmanuel.Chasseigne@lmpt.univ-tours.fr}
}

\author{P. Felmer}
\address{
Patricio Felmer:
Departamento de Ingenier\'\i a Matem\'atica and CMM (UMI 2807 CNRS), Universidad de Chile, Casilla 170 Correo 3, Santiago, Chile.
\newline {\tt pfelmer@dim.uchile.cl}
}

\author{J. D. Rossi}
\address{
J. D. Rossi: Departamento de An\'{a}lisis Matem\'{a}tico,
Universidad de Alicante, Ap. correos 99, 03080, Alicante, Spain
and Departamento de Matem\'{a}tica, FCEyN UBA, Ciudad
Universitaria, Pab 1 (1428), Buenos Aires, Argentina. 
\newline {\tt julio.rossi@ua.es}
}

\author{E. Topp}
\address{
Erwin Topp:
Departamento de Ingenier\'\i a Matem\'atica (UMI 2807 CNRS), Universidad de Chile, Casilla 170, Correo 3, Santiago, Chile.
and Laboratoire de Math\'ematiques et Physique Th\'eorique (CNRS UMR 7350), Universit\'e Fran\c{c}ois Rabelais,
Parc de Grandmont, 37200, Tours, France. 
\newline {\tt etopp@dim.uchile.cl} 
}

\keywords{nonlocal diffusion, decay rates.\\
\indent 2010 {\it Mathematics Subject Classification.} 35K05, 45P05, 35B40.}

\date{\today}

\begin{abstract}
In this paper we obtain bounds for the decay rate
for solutions to the nonlocal problem $\partial_t u(t,x) = \int_{\R^n} J(x,y)[u(t,y) - u(t,x)] dy$. Here we deal with bounded kernels $J$ but with 
polynomial tails, that is, we assume a lower bound of the form $J(x,y) \geq c_1|x-y|^{-(n + 2\sigma)}$, for  $|x - y| > c_2$. Our estimates takes 
the form $\|u(t)\|_{L^q(\R^n)} \leq C t^{-\frac{n}{2\sigma} (1 - \frac{1}{q})}$ for $t$ large.
\end{abstract}

\maketitle

\section{Introduction.}

Let $J(x,y)$ be a bounded, positive, continuous, symmetric function defined for $(x,y)\in \R^n \times \R^n$. Here we address nonlocal Cauchy problems
of the form
\begin{eqnarray}\label{eq}
\partial_t u(t,x) = \int_{\R^n} J(x,y)[u(t,y) - u(t,x)] dy \quad \mbox{in} \ \R_+ \times \R^n
\end{eqnarray}
with the initial condition $u(0, x) = u_0(x)$ satisfying $u_0 \in L^1(\R^n) \cap L^\infty(\R^n)$. On $J$ we also assume the basic condition
\begin{equation}\tag{J1}\label{integrabilityJ}
\begin{split}
x \mapsto J(x,y) \in L^1(\R^n) \quad \mbox{for all} \ y \in \R^n.
\end{split}
\end{equation}

By the symmetry of $J$ we have the same property exchanging $x$ and $y$.

Our main interest here is the asymptotic
behavior of the solution of~\eqref{eq} as $t \to +\infty$. It is well known that the diffusive nature of the equation implies that 
the solution goes to zero
when $t \to +\infty$. For smooth kernels $J$ with compact support, it is proven in \cite{Ignat-Rossi} that the solution $u$ of the equation~\eqref{eq}
has the decay estimate
$$
\|u(\cdot, t)\|_{L^q(\R^n)} \leq C t^{-\frac{n}{2}(1 - \frac{1}{q})}
$$
for any $q \in [1, \infty)$. Note that this decay rate is the same as the one that holds for solutions of the classical Heat equation. In the case of an equation in
convolution form, that is when $J(x,y) = K(x - y)$ with $K$ a nonnegative radial function, not necessarily compactly supported, it is proven
in \cite{Chasseigne-Chaves-Rossi} that the solutions of equations with the form~\eqref{eq} have the decay estimate
\begin{equation}\label{fractionaldecay}
\|u(\cdot, t)\|_{L^q(\R^n)} \leq C t^{-\frac{n}{2\sigma}(1 - \frac{1}{q})},
\end{equation}
provided the function $K$ has a Fourier transform satisfying the expansion $\hat{K}(\xi) = 1 - A|\xi|^{2\sigma} + o(|\xi|^{2\sigma})$, where $A > 0$
is a constant. It is remarkable that in this case the decay estimate is analogous to the one for the $\sigma-$order fractional heat equation, $v_t = -(-\Delta)^\sigma v$, 
with $\sigma \in (0,1)$. We also note that the convolution form of the equation allows the use of Fourier analysis to obtain this result.

In this work we consider kernels $J$ not in convolution form and not compactly supported in order to investigate the diffusive role of 
the asymptotic tails of the kernels when doing 
decay estimates. We illustrate
this feature asking for an asymptotic profile for $J$ with the following lower bound
\begin{equation}\tag{J2}\label{hypJ}
J(x,y) \geq c_1|x-y|^{-(n + 2\sigma)}, \quad \mbox{for } \ |x - y| > c_2
\end{equation}
for certain constants $c_1, c_2 > 0$ and $\sigma \in (0,1)$. For simplicity, to clarify the arguments we will assume $c_2 = 1$.

We remark that the use of Fourier analysis is not helpful here due to fact that our operator is not in convolution form.
Despite of this difficulty, energy methods can be carried out in order to obtain the main result of this paper, which reads as follows:

\begin{teo}\label{teo}
Let $ n \geq 2$ and $J$ be a bounded, nonnegative, continuous, symmetric function defined in $\R^n$ satisfying~\eqref{integrabilityJ}
and~\eqref{hypJ}.
Then, for each $q \in (2\sigma, +\infty)$, the solution of~\eqref{eq} associated to an initial condition $u_0 \in L^1(\R^n) \cap L^\infty(\R^n)$
goes to zero in $L^q (\R^n)$ as $t \to +\infty$. Moreover, the solution has an asymptotic decay that can be bounded by
\begin{eqnarray}\label{Lqdecay.11}
\limsup_{t\to \infty}  t^{\frac{n}{2\sigma} (1 - \frac{1}{q})} \|u(\cdot, t)\|_{L^q(\R^n)} \leq C \|u_{0}\|_{L^{1}(\R^{n})},
\end{eqnarray}
and, for all $t>0$, it holds
\begin{eqnarray}\label{Lqdecay}
\|u(\cdot, t)\|_{L^q(\R^n)} \leq C \max \{ \|u_{0}\|_{L^{1}(\R^{n})}, 
 \|u_{0}\|_{L^{q}(\R^{n})} \} t^{-\frac{n}{2\sigma} (1 - \frac{1}{q})},
\end{eqnarray}
where the constant $C$ depends on $q, \sigma$ and $n$.
\end{teo}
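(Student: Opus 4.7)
My plan is to combine an $L^q$-energy identity with a Nash-type inequality adapted to the truncated kernel, then integrate the resulting ODE to obtain decay at $q=2$ and extend to the full range by a similar energy argument plus interpolation. As preliminaries I would record that $\|u(t)\|_{L^1(\R^n)} \leq \|u_0\|_{L^1(\R^n)}$ and that $t\mapsto\|u(t)\|_{L^p(\R^n)}$ is non-increasing for every $p\in[1,\infty]$; both follow by multiplying \eqref{eq} by $|u|^{p-2}u$ (or $\operatorname{sgn} u$ at $p=1$, or via the maximum principle at $p=\infty$) and using the symmetry of $J$ to recognize the right-hand side as a nonpositive bilinear form.

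For the main case $q\geq 2$, I would test \eqref{eq} against $|u|^{q-2}u$, use the elementary bound $(|a|^{q-2}a-|b|^{q-2}b)(a-b)\geq c_q(V(a)-V(b))^2$ with $V(s)=|s|^{q/2-1}s$, and invoke the lower bound \eqref{hypJ}, obtaining
\[
\frac{d}{dt}\|u(t)\|_{L^q(\R^n)}^q \leq -C\,D\bigl(V(u(t))\bigr),\qquad D(w):=\int\!\!\int_{|x-y|>1}\frac{(w(x)-w(y))^2}{|x-y|^{n+2\sigma}}\,dx\,dy.
\]
The central tool is a Nash-type inequality of the form
\[
\|w\|_{L^2(\R^n)}^{2(1+2\sigma/n)} \leq C\,D(w)\,\|w\|_{L^1(\R^n)}^{4\sigma/n},
\]
valid in the regime $D(w)\leq\|w\|_{L^1(\R^n)}^2$ (the complementary regime $D(w)>\|w\|_{L^1(\R^n)}^2$ yields the stronger exponential decay $\|w\|_{L^2}^2\leq CD(w)$). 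Applied to $w=V(u)$, and using $\|V(u)\|_{L^2}^2=\|u\|_{L^q}^q$, $\|V(u)\|_{L^1}=\|u\|_{L^{q/2}}^{q/2}$, together with the interpolation $\|u\|_{L^{q/2}}\leq\|u\|_{L^1}^{1/(q-1)}\|u\|_{L^q}^{(q-2)/(q-1)}$ and the $L^1$-bound, this produces a closed differential inequality $y'\leq -C\,y^{1+2\sigma/(n(q-1))}\|u_0\|_{L^1}^{-2q\sigma/(n(q-1))}$ for $y=\|u\|_{L^q}^q$, whose integration yields precisely the rate $\|u(t)\|_{L^q}\leq C\|u_0\|_{L^1}\,t^{-\frac{n}{2\sigma}(1-1/q)}$ of \eqref{Lqdecay.11}.

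The main obstacle I anticipate is the Nash inequality itself, since the dissipation controls only $D(w)$ rather than the full Gagliardo seminorm (the singular near-field $|x-y|<1$ is missing). The cleanest route is to rewrite $D$ as a Fourier quadratic form $D(w)=\int_{\R^n} K(\xi)|\hat w(\xi)|^2\,d\xi$ with $K(\xi)=\int_{|z|>1}(1-\cos(z\cdot\xi))|z|^{-n-2\sigma}\,dz$; splitting this integral at $|z|=|\xi|^{-1}$ verifies $K(\xi)\asymp\min(|\xi|^{2\sigma},1)$, and then the standard frequency cutoff $\|w\|_{L^2}^2=\int_{|\xi|<R}|\hat w|^2+\int_{|\xi|\geq R}|\hat w|^2\leq CR^n\|w\|_{L^1}^2+CR^{-2\sigma}D(w)$ for $R\leq 1$, optimized in $R$, produces the desired Nash bound. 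For $q\in(2\sigma,2)\cap[1,2]$, the stated decay is then obtained by interpolating the $q=2$ bound with the $L^1$-contraction, which reproduces the exponent $\frac{n}{2\sigma}(1-1/q)$ exactly; the $\max$ of $\|u_0\|_{L^1}$ and $\|u_0\|_{L^q}$ in \eqref{Lqdecay} accounts for the short-time regime where the initial data (rather than the asymptotic rate) dominates, which is handled by adjusting the constant in the ODE solution so that the bound is respected at $t\to 0^+$.
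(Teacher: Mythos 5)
Your approach is correct in its essentials but takes a genuinely different route from the paper. Where the paper applies the scalar inequality $(a-b)(|a|^{q-2}a-|b|^{q-2}b)\geq C|a-b|^q$ and then works directly with the $q$-form $E_{J,q}(u)=\iint J|u(x)-u(y)|^q$, decomposing $u=v+w$ by mollification (Proposition~\ref{keyprop}) and invoking the fractional Sobolev embedding~\eqref{sobolevineq} to upgrade integrability, you instead reduce to the quadratic case via Stroock--Varopoulos, $(a-b)(|a|^{q-2}a-|b|^{q-2}b)\geq c_q\,(V(a)-V(b))^2$ with $V(s)=|s|^{q/2-1}s$, and then control the resulting $L^2$ Dirichlet form by a Nash inequality proved on the Fourier side. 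The paper explicitly avoids Fourier analysis because $J$ is not in convolution form, but you correctly observe that \emph{after} replacing $J$ by the truncated power kernel $\mathbf{1}_{|x-y|>1}|x-y|^{-n-2\sigma}$ the form $D(w)$ \emph{is} of convolution type, so the symbol computation $K(\xi)\asymp\min(|\xi|^{2\sigma},1)$ and the standard frequency cut-off are legitimate. Both routes deliver the same two-regime differential inequality (exponential above a threshold, algebraic below) and the same exponent $\tfrac{n}{2\sigma}(1-\tfrac1q)$, including the asymptotic bound~\eqref{Lqdecay.11}. Your route is arguably more elementary (it replaces the $u=v+w$ decomposition and the Gagliardo-seminorm comparison with off-the-shelf tools), whereas the paper's approach works intrinsically at the $L^q$ level.

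There is, however, a real gap in your treatment of the all-time bound~\eqref{Lqdecay} for $q\in(\max(1,2\sigma),2)$, which is a nonempty range since $\sigma\in(0,1)$. Your ODE argument via Stroock--Varopoulos needs $q\geq 2$, because you bound $\|V(u)\|_{L^1}=\|u\|_{L^{q/2}}^{q/2}$ by interpolating $L^{q/2}$ between $L^1$ and $L^q$, which requires $q/2\geq 1$; for $q<2$ the quantity $\int|u|^{q/2}$ is not even controlled by $\|u\|_{L^1}$ and $\|u\|_{L^q}$ (take $u(x)=(1+|x|)^{-n-\epsilon}$ with $\epsilon$ small). You therefore fall back on interpolating the $q=2$ estimate against $L^1$-contraction. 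That step reproduces~\eqref{Lqdecay.11} exactly, but for~\eqref{Lqdecay} it yields a constant of the form $\max\{\|u_0\|_{L^1},\|u_0\|_{L^2}\}$ rather than $\max\{\|u_0\|_{L^1},\|u_0\|_{L^q}\}$, and for $q<2$ one cannot bound $\|u_0\|_{L^2}$ by $\|u_0\|_{L^1}$ and $\|u_0\|_{L^q}$. The paper's direct argument, which works for all $q>2\sigma$ without passing through $q=2$, is what gives the stated dependence on $\|u_0\|_{L^q}$ in this range. Your sketch of the $\max$ as ``handled by adjusting the constant in the ODE solution at $t\to0^+$'' is also underspecified compared to the paper's explicit case split on whether $\phi(0)=\|u_0\|_{L^q}^q$ is above or below the threshold $K_2\sim\|u_0\|_{L^1}^q$; that split is exactly what produces the $\max$ for $q\geq2$, and you should carry it out explicitly rather than asserting it.
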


For the case $q \in (1, 2\sigma]$ we have the following 
\begin{cor}\label{cor}
Let $ n \geq 2$ and $J$ be a bounded, nonnegative, continuous, symmetric function defined in $\R^n$ satisfying~\eqref{integrabilityJ}
and~\eqref{hypJ}. Let $u$ be the solution of~\eqref{eq} associated to an initial condition $u_0 \in L^1(\R^n) \cap L^\infty(\R^n)$. 
If $q \in (1, 2\sigma]$, then the asymptotic decay given by~\eqref{Lqdecay.11} holds. Moreover, for all 
$r > 2\sigma$ and for all $t > 0$ we have
\begin{equation*}
\|u(\cdot, t)\|_{L^q(\R^n)} \leq C \max \{ \|u_{0}\|_{L^{1}(\R^{n})}, 
 \|u_{0}\|_{L^{r}(\R^{n})} \} t^{-\frac{n}{2\sigma} (1 - \frac{1}{q})},
\end{equation*}
with $C$ independent of $u_0$. 
\end{cor}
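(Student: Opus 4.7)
The plan is to obtain the estimates for $q\in(1,2\sigma]$ by Hölder interpolation between $L^1$ and $L^r$ for an arbitrarily chosen $r>2\sigma$, using Theorem~\ref{teo} for the $L^r$ endpoint and the $L^1$-contractivity of the semigroup for the other endpoint.

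First I would record the fact that the nonlocal heat semigroup associated with \eqref{eq} is order-preserving and mass-preserving (thanks to the symmetry of $J$ and Fubini applied to $\partial_t\int u$), so that for any signed initial datum $u_0\in L^1(\R^n)\cap L^\infty(\R^n)$ one has
\[
\|u(\cdot,t)\|_{L^1(\R^n)}\leq \|u_0\|_{L^1(\R^n)} \qquad \text{for all } t>0.
\]
This is the input that will carry the low integrability endpoint through the interpolation; it is a standard fact for this kind of equation and I expect it to require only a short verification (split $u_0$ into positive and negative parts and use nonnegativity preservation).

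Next, given $q\in(1,2\sigma]$ and a fixed $r>2\sigma$, I would choose $\theta\in(0,1)$ so that $1/q=\theta+(1-\theta)/r$ (possible since $q<r$). Note the convenient identity $(1-\theta)(1-1/r)=1-1/q$, which is exactly what makes the time exponents line up. By Hölder's inequality,
\[
\|u(\cdot,t)\|_{L^q(\R^n)} \leq \|u(\cdot,t)\|_{L^1(\R^n)}^{\theta}\, \|u(\cdot,t)\|_{L^r(\R^n)}^{1-\theta}.
\]
For the $L^r$ factor I would invoke Theorem~\ref{teo} to get $\|u(\cdot,t)\|_{L^r}\leq C\max\{\|u_0\|_{L^1},\|u_0\|_{L^r}\}\,t^{-\frac{n}{2\sigma}(1-1/r)}$, and for the $L^1$ factor I use the contractivity bound above. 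Multiplying these and using the exponent identity gives the pointwise-in-$t$ estimate claimed in the corollary, with the maximum $\max\{\|u_0\|_{L^1},\|u_0\|_{L^r}\}$ as the natural constant.

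For the $\limsup$ refinement \eqref{Lqdecay.11} I would apply the same interpolation but now exploit that the constant in Theorem~\ref{teo} asymptotically depends only on $\|u_0\|_{L^1}$: multiplying the Hölder inequality by $t^{\frac{n}{2\sigma}(1-1/q)}$ and writing $t^{\frac{n}{2\sigma}(1-1/q)}=\bigl(t^{\frac{n}{2\sigma}(1-1/r)}\bigr)^{1-\theta}$, one gets
\[
t^{\frac{n}{2\sigma}(1-1/q)}\|u(\cdot,t)\|_{L^q(\R^n)} \leq \|u_0\|_{L^1(\R^n)}^{\theta}\,\bigl(t^{\frac{n}{2\sigma}(1-1/r)}\|u(\cdot,t)\|_{L^r(\R^n)}\bigr)^{1-\theta},
\]
and taking $\limsup$ together with \eqref{Lqdecay.11} for the exponent $r>2\sigma$ yields the desired bound $C^{1-\theta}\|u_0\|_{L^1}$. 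The arithmetic here is routine; the only genuine subtlety is the $L^1$-contractivity step, so that is the point I would write out most carefully, while the interpolation itself is transparent once the exponents are matched.
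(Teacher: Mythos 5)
Your proposal is correct and follows essentially the same route as the paper: Hölder interpolation between $L^1(\R^n)$ and $L^r(\R^n)$ for a fixed $r>2\sigma$, with the $L^r$ endpoint controlled by Theorem~\ref{teo} and the $L^1$ endpoint controlled by the contraction estimate $\|u(\cdot,t)\|_{L^1(\R^n)}\le\|u_0\|_{L^1(\R^n)}$ (which the paper already supplies in Theorem~\ref{existence}, so you need not re-derive it). The exponent bookkeeping $(1-\theta)(1-1/r)=1-1/q$ and the final absorption of $\|u_0\|_{L^1}^\theta\max\{\cdot\}^{1-\theta}$ into $\max\{\cdot\}$ are both handled correctly.
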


Note that the kernel $J$ defining the nonlocal operator in~\eqref{eq} shares, at one hand, the ``fat'' tails of the fractional
Laplacian of order $2\sigma$, and on the other, the integrability property of zero order operators.
However, the decay rate of the solution of problem~\eqref{eq} is as in the case of the fractional Laplacian
(see~\eqref{fractionaldecay}) and not as in the classical zero order case defined by compactly supported kernels. This shows
the diffusive effect of the fractional Laplacian is more related with the tails of the kernel, which allows to transport mass to infinity through
long jumps, than the highly intense, but shorter jumps related with the singularity at the origin.

\section{Basic Facts and Preliminaries.}

Concerning the existence and uniqueness of problem~\eqref{eq}, the symmetry, boundedness and integrability assumptions over $J$,
allows us to perform a fixed point argument to obtain the following result whose proof is omitted.
\begin{teo}[\cite{libro}, \cite{Ignat-Rossi}]\label{existence}
Let $J: \R^n \times \R^n \to \R$ be a bounded and symmetric function satisfying assumption~\eqref{integrabilityJ} and~\eqref{hypJ},
and let $u_0 \in L^1(\R^n) \cap L^\infty(\R^n)$. Then, there exists a unique solution
$u \in C([0,+\infty), L^1(\R^n) \cap L^\infty(\R^n))$ of equation~\eqref{eq}. This solution satisfies
$||u(\cdot, t)||_{L^1(\R^n)} \leq ||u_0||_{L^1(\R^n)}$ and $||u(\cdot, t)||_{L^\infty(\R^n)} \leq ||u_0||_{L^\infty(\R^n)}$ for all $t \geq 0$.
\end{teo}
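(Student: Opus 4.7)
The plan is a standard Banach fixed-point argument applied to the Duhamel (mild) formulation of \eqref{eq}, combined with symmetry and maximum-principle arguments for the a priori bounds. Writing $a(x) := \int_{\R^n} J(x,y)\,dy$ and $Ku(x) := \int_{\R^n} J(x,y)u(y)\,dy$, equation \eqref{eq} becomes $\partial_t u + a(x)u = Ku$, whose mild form is
$$
T[u](t,x) := e^{-a(x)t}u_0(x) + \int_0^t e^{-a(x)(t-s)}\,Ku(s,\cdot)(x)\,ds.
$$
Hypothesis \eqref{integrabilityJ} together with boundedness and symmetry of $J$ ensure that $a \in L^\infty(\R^n)$ (implicit in the ambient setting), that $K$ is bounded on $L^\infty(\R^n)$ via $|Ku(x)|\leq a(x)\|u\|_\infty$, and bounded on $L^1(\R^n)$ via Fubini combined with the symmetry of $J$.

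First I would apply Banach's fixed-point theorem to $T$ on the space $X_T := C([0,T]; L^1(\R^n)\cap L^\infty(\R^n))$ endowed with the natural sup-in-time norm $\sup_{t\in[0,T]}(\|u(t)\|_{L^1} + \|u(t)\|_{L^\infty})$. A direct estimate gives that $T$ is Lipschitz in $X_T$ with constant of order $T\cdot(\|K\|_{L^1\to L^1} + \|K\|_{L^\infty\to L^\infty})$, so it is a strict contraction once $T$ is chosen small depending only on $\|J\|_\infty$ and $\|a\|_\infty$. This produces a unique local mild solution; differentiating the Duhamel formula upgrades it to a strong solution of \eqref{eq}, and the fixed-point uniqueness transfers to \eqref{eq}.

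Next I would establish the two contraction bounds, which both permit the extension of the local solution to $[0,+\infty)$. For the $L^1$ estimate I would multiply \eqref{eq} by a smooth approximation $\mathrm{sgn}_\varepsilon(u)$ of $\mathrm{sgn}(u)$ and integrate in $x$; Fubini and $J(x,y)=J(y,x)$ rewrite the right-hand side in the manifestly non-positive symmetric form
$$
-\tfrac12\iint J(x,y)(u(y)-u(x))(\mathrm{sgn}_\varepsilon(u(y))-\mathrm{sgn}_\varepsilon(u(x)))\,dy\,dx \leq 0,
$$
so that sending $\varepsilon\to 0$ yields $\frac{d}{dt}\|u(t)\|_{L^1}\leq 0$. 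For the $L^\infty$ estimate I would apply a nonlocal maximum principle to $v:=u-\|u_0\|_{L^\infty}$: wherever $v^+$ is nearly maximal one has $\partial_t v\leq \int J(x,y)(v(y)-v(x))\,dy\leq 0$; combining a penalization of the form $\varepsilon t$ (or $\varepsilon(1+t)$) with the $L^1$ bound just obtained forces $v$ to decay at spatial infinity in an averaged sense, so the supremum is essentially attained and the argument closes. The same reasoning applied to $-u$ gives the two-sided bound.

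The main obstacle is precisely this nonlocal maximum principle on all of $\R^n$: because the supremum of $u(t,\cdot)$ is in general not attained and because $J$ couples arbitrarily distant points through its tails, the usual pointwise reasoning must be combined with the $L^1$ contraction in order to recover enough compactness to pass to the limit in the penalization. Once this step is settled, the linearity of \eqref{eq} makes the extension from the local to the global solution routine, and both contractions hold uniformly for $t\geq 0$.
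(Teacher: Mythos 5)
The paper does not actually prove this theorem: it is quoted from \cite{libro} and \cite{Ignat-Rossi} with only the remark that it follows from a fixed point argument, so your Duhamel/Banach contraction strategy is exactly the intended route, and your treatment of existence, uniqueness, and the $L^1$ bound (the symmetric rewriting with $\mathrm{sgn}_\varepsilon$) is correct and standard. Two points deserve attention. First, you rightly flag that one needs $a(x)=\int_{\R^n}J(x,y)\,dy\in L^\infty(\R^n)$: hypothesis \eqref{integrabilityJ} only gives finiteness of this integral for each fixed point, not a uniform bound, so this must either be read into the standing assumptions (as in the cited references, where the kernel is normalized) or added explicitly --- without it $K$ is not bounded on $L^1$ or $L^\infty$ and the contraction estimate fails. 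Second, your $L^\infty$ step is the genuinely shaky part: at a point where $v$ is only \emph{nearly} maximal the inequality $\int J(x,y)(v(y)-v(x))\,dy\le 0$ is false (the integrand can be positive of order $\varepsilon$, giving only a bound $\le\|a\|_\infty\varepsilon$), and the assertion that the penalization together with the $L^1$ bound makes the supremum ``essentially attained'' is not yet an argument. The clean fix avoids any maximum principle at infinity: the closed subset $\{u\in X_T:\ \|u(t)\|_{L^\infty}\le\|u_0\|_{L^\infty}\ \mbox{for all }t\}$ is invariant under your map $T$, because
\begin{equation*}
|T[u](t,x)|\ \le\ e^{-a(x)t}\|u_0\|_{L^\infty}+\bigl(1-e^{-a(x)t}\bigr)\sup_{s\le t}\|u(s)\|_{L^\infty},
\end{equation*}
so the fixed point automatically satisfies the $L^\infty$ contraction on each interval of existence, and the global extension then follows as you describe.
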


The symmetry assumption over $J$ allows us to use an energy approach in order to get Theorem~\ref{teo}. Roughly speaking, this assumption
allows us to ``integrate by parts" equation~\eqref{eq}. For $q > 1$ we multiply the equation by $q|u|^{q-2}u$ and integrate, obtaining the inequality
\begin{equation}\label{testing0}
\begin{split}
& \partial_t \int_{\R^n} |u(x)|^q dx \\
= & -\frac{q}{2} \iint_{\R^{2n}} J(x,y) (u(y) - u(x))(|u(y)|^{q-2}u(y) - |u(x)|^{q-2}u(x))\, dy \, dx,
\end{split}
\end{equation}
where we have avoided the dependence on $t$ of the function $u$ for simplicity.

Now we recall the following inequality (whose proof is given in the Appendix): 
let $q > 1$ and $a, b \neq 0$. Then, there exists a constant $C$ depending only on $q$, such that
\begin{equation}\label{desigualdad1}
(a - b) (|a|^{q - 2}a - |b|^{q - 2}b) \geq C |a - b|^q.
\end{equation}

Hence, using this inequality into~\eqref{testing0}, we conclude
\begin{equation}\label{testingqbig}
\begin{split}
\partial_t ||u||_{L^q(\R^n)}^q \leq -C_q \iint_{\R^{2n}} J(x,y) |u(y) - u(x)|^q dy dx =: -C_q E_{J,q}(u).
\end{split}
\end{equation}

Our strategy is to get an estimate for the energy $E_{J,q}(u)$ in the right-hand side of the above expression in terms of a higher
$L^p-$norm of $u$.
This can be accomplished due to the asymptotic behavior~\eqref{hypJ} of the kernel. Being similar to the tails of the kernel of the fractional
Laplacian of order $2\sigma$, we compare $E_{J, q}$ in~\eqref{testingqbig} with an ad-hoc fractional seminorm, for which Sobolev-type inequalities
are available.

We recall that for $\sigma \in (0,1)$ and $q \in [1, \infty)$, $W^{\sigma, q}(\R^n)$ is the fractional Sobolev space of all $L^q(\R^n)$
functions with bounded fractional seminorm $[v]_{\sigma, q}$, given by
\begin{eqnarray}\label{defseminorm}
[v]_{\sigma, q}^q = \iint_{\R^{2n}} \frac{|v(x + z) - v(x)|^q}{|z|^{n + q\sigma}} dx\, dz.
\end{eqnarray}

Under these definitions, the key step in our approach is to relate the energy in~\eqref{testingqbig} with the corresponding fractional seminorm
defined in~\eqref{defseminorm}. Once this is accomplished, we apply the following fractional Sobolev-type inequality, which asserts the existence
of a constant $C > 0$ such that for each $v \in W^{\sigma, q}(\R^n)$ with $\sigma q < n$, it holds
\begin{eqnarray}\label{sobolevineq}
||v||_{L^{q^\star}(\R^n)}^q \leq C [v]_{\sigma, q}^q,
\end{eqnarray}
where $q^\star = q^\star(\sigma) = nq / (n - \sigma q)$ (see~\cite{Hitch}). By the use of the above inequality we get the desired increment of
the $L^p-$norm of the solutions, and after this the proof follows standard arguments.

Once we have obtained Theorem~\ref{teo} which is valid for $q > 2\sigma$, the corresponding decay estimate for the remaining case 
$q \in (1, 2\sigma]$ can be obtained by interpolation.
Hence, at the very end our arguments rely on estimates for the nonlinear operator in the right hand side of~\eqref{testingqbig}.

\section{Fractional Seminorm Estimates.}

First, we consider a positive smooth function $\psi: \R^n \to \R$ with the following properties
\begin{equation}\label{hyppsi}
\mbox{supp} (\psi) \subset B_1, \qquad \mbox{and} \qquad
\int_{\R^n} \psi(x)dx = 1.
\end{equation}

With the aid of this function, we split a function $u$ into two parts. We will denote the ``smooth'' part of $u$ as $v$ and the remaining as $w$. We let
\begin{equation}\label{defvw}
v(t,x) := \int_{\R^n} \psi(x - z) u(t, z) dz; \qquad u(t,x) := v(t,x) + w(t,x).
\end{equation}

As a first property of this decomposition we have that each $L^q$ norm of the functions $v$ and $w$ is controlled by the corresponding norm of $u$.
\begin{lema}\label{vw<u}	
Let $v$ and $w$ be given by~\eqref{defvw}. For each $q \in [1, +\infty)$, there exists $C = C(q, \psi)$ such that
\begin{equation*}
\|v\|_{L^q(\R^n)} \leq C \|u\|_{L^q(\R^n)}, \quad \mbox{and} \quad \|w\|_{L^q(\R^n)} \leq C \|u\|_{L^q(\R^n)}.
\end{equation*}
\end{lema}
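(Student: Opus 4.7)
The plan is to prove the two inequalities separately, noting that the bound on $w$ follows immediately from the bound on $v$ by the triangle inequality.

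First I would handle $v$. Since $v(t,x) = \int_{\R^n} \psi(x-z) u(t,z)\,dz$ is nothing but the convolution $\psi \ast u(t,\cdot)$, and since by the hypotheses on $\psi$ in \eqref{hyppsi} we have $\psi \geq 0$ and $\|\psi\|_{L^1(\R^n)} = 1$, the estimate $\|v(t,\cdot)\|_{L^q(\R^n)} \leq \|u(t,\cdot)\|_{L^q(\R^n)}$ is a direct application of Young's convolution inequality. Alternatively, one can invoke Jensen's inequality with the probability measure $\psi(x-z)\,dz$: for each fixed $x$,
\[
|v(t,x)|^q = \Big| \int_{\R^n} \psi(x-z) u(t,z)\,dz \Big|^q \leq \int_{\R^n} \psi(x-z) |u(t,z)|^q\,dz,
\]
and then integrating in $x$ and applying Fubini together with $\int \psi = 1$ yields the required inequality with constant $C = 1$.

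Next I would handle $w$. Since $w = u - v$ by definition \eqref{defvw}, the triangle inequality in $L^q(\R^n)$ gives
\[
\|w(t,\cdot)\|_{L^q(\R^n)} \leq \|u(t,\cdot)\|_{L^q(\R^n)} + \|v(t,\cdot)\|_{L^q(\R^n)} \leq 2 \|u(t,\cdot)\|_{L^q(\R^n)},
\]
using the bound for $v$ just established.

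There is no serious obstacle here: the statement is essentially a reformulation of Young's inequality for convolutions with an $L^1$-normalised mollifier, and the dependence of the constant on $\psi$ is trivial (in fact $C = 2$ works uniformly in $\psi$). The only thing to keep in mind is that $\psi$ is a probability density, which ensures the sharp constant in Jensen's inequality and makes Young's inequality applicable without any loss; the smoothness and support properties of $\psi$ from \eqref{hyppsi} play no role in this particular lemma but will matter later when estimating the seminorm of $v$.
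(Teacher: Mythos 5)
Your proof is correct and follows essentially the same route as the paper: the paper also estimates $v$ by splitting $\psi = \psi^{1/q'}\psi^{1/q}$ and applying H\"older's inequality (which, for the probability measure $\psi(x-z)\,dz$, is exactly the Jensen argument you give), then Fubini, and handles $w$ by the triangle inequality. Your observation that the constant can be taken to be $2$ (rather than the unspecified $C(q,\psi)$ in the paper) is a minor sharpening but not a different method.
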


\begin{proof} We start with $v$. Using its definition, we have
$$
\begin{array}{l}
\displaystyle
\int_{\R^n} |v(x)|^qdx = \int_{\R^n} \Big{|} \int_{\R^n} \psi(x- y) u(y) dy \Big{|}^q dx \\[10pt]
\qquad\displaystyle = \int_{\R^n} \Big{|} \int_{\R^n} \psi(x- y)^{1/q'} \psi(x-y)^{1/q} u(y) dy \Big{|}^q dx \\[10pt]
\qquad\displaystyle \leq \int_{\R^n} \Big{[} \Big{(} \int_{\R^n} \psi(x - y) dy\Big{)}^{1/q'} \Big{(}\int_{\R^n} \psi(x - y)|u(y)|^q dy\Big{)}^{1/q} \Big{]}^q dx \\[10pt]
\qquad\displaystyle
\leq C(q, \psi) \int_{\R^n} \int_{\R^n} \psi(x - y) |u(y)|^q dy dx \\[10pt]
\qquad\displaystyle
= C(q, \psi) \int_{\R^n}  |u(y)|^q \int_{\R^n} \psi(x - y) dx dy \\[10pt]
\displaystyle \qquad
\leq C(q, \psi) \int_{\R^n} |u(y)|^q dy.
\end{array}
$$
The inequality conrresponding to $w$ easily follows from the triangular inequality in $L^q$.
\end{proof}

Now we state a key result to get the desired estimate on the decay rate.
\begin{prop}\label{keyprop}
Let $J: \R^n \times \R^n \to \R_+$ a bounded and symmetric function satisfying hypotheses~\eqref{integrabilityJ} and~\eqref{hypJ}.
Let $\psi$ satisfying~\eqref{hyppsi} and $q \in (2\sigma, + \infty)$.
Then, there exists a constant $C > 0$ such that for all $u \in L^q(\R^n)$ and $v, w$ defined in~\eqref{defvw}, we have
\begin{equation}\label{deskeyprop}
[v]_{2\sigma q^{-1}, q}^q + \|w\|_{L^q(\R^n)}^q \leq C \iint_{\R^{2n}} J(x,y) |u(x) - u(y)|^q dx\, dy.
\end{equation}

The constant $C$ depends on $\psi, \sigma, q$ and $n$.
\end{prop}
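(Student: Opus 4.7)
The plan is to handle the two summands on the left-hand side of \eqref{deskeyprop} separately, and to bypass the fact that \eqref{hypJ} gives no pointwise lower bound on $J$ in the near field by a \emph{triangle inequality through a far intermediate point}. Concretely, for any pair $(\xi,\eta)$ with $|\xi-\eta|\le 1$, I fix the annular shell
\[ A(\xi,\eta)=\{z\in\R^n:\ 2\le |z-(\xi+\eta)/2|\le 3\}, \]
so that every $z\in A(\xi,\eta)$ satisfies $3/2\le |z-\xi|,|z-\eta|\le 7/2$; hence by \eqref{hypJ} both $J(\xi,z)$ and $J(\eta,z)$ are uniformly bounded below on this set. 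The convexity bound
\[ |u(\xi)-u(\eta)|^q \le 2^{q-1}\bigl(|u(\xi)-u(z)|^q+|u(z)-u(\eta)|^q\bigr), \]
averaged over $z\in A(\xi,\eta)$, then converts any near-field difference of $u$ into two far-field differences that \eqref{hypJ} makes controllable by the right-hand side of \eqref{deskeyprop}.

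Using $\int\psi=1$, the first step writes $w(x)=\int\psi(x-y)(u(x)-u(y))\,dy$, so Jensen gives $|w(x)|^q\le \int\psi(x-y)|u(x)-u(y)|^q\,dy$. Integrating in $x$ and noting that $\mathrm{supp}\,\psi\subset B_1$ localises the integral to $|x-y|\le 1$, the triangle trick above followed by a Fubini swap (the auxiliary $y$-integration region for fixed $(x,z)$ having bounded measure) yields
\[\|w\|_{L^q(\R^n)}^q\le C\iint_{\R^{2n}} J(x,y)\,|u(x)-u(y)|^q\,dx\,dy,\]
which is the first half of \eqref{deskeyprop}.

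For the seminorm $[v]_{2\sigma/q,q}^q=\iint |v(x)-v(y)|^q|x-y|^{-(n+2\sigma)}\,dx\,dy$, I split into $\{|x-y|>1\}$ and $\{|x-y|\le 1\}$. On the far piece, writing $v=u-w$ gives $|v(x)-v(y)|^q\le C(|u(x)-u(y)|^q+|w(x)|^q+|w(y)|^q)$; the $u$-term is controlled directly by \eqref{hypJ}, and the two $w$-terms produce $C\|w\|_{L^q}^q$ because $|z|^{-(n+2\sigma)}$ is integrable for $|z|>1$, so the previous step closes the estimate. On the near piece I exploit the smoothness of $v=\psi\ast u$ via
\[v(y)-v(x)=(y-x)\cdot\int_0^1 \nabla v(x+t(y-x))\,dt;\]
Jensen yields $|v(y)-v(x)|^q\le |y-x|^q\int_0^1|\nabla v(x+t(y-x))|^q\,dt$, and after the substitution $z=x+t(y-x)$ and Fubini, the near-field piece of the seminorm is bounded by $C\|\nabla v\|_{L^q(\R^n)}^q$. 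This is where the hypothesis $q>2\sigma$ enters: it is precisely what makes the kernel $|h|^{q-n-2\sigma}$ integrable near the origin.

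It remains to estimate $\|\nabla v\|_{L^q}^q$ by the energy. Since $\int\nabla\psi=0$, one has $\nabla v(x)=\int\nabla\psi(x-y)(u(y)-u(x))\,dy$; Jensen (with the probability measure $|\nabla\psi|/\|\nabla\psi\|_{L^1}$) gives $|\nabla v(x)|^q\le C\int|\nabla\psi(x-y)|\,|u(x)-u(y)|^q\,dy$, and since $\mathrm{supp}\,\nabla\psi\subset B_1$, only near pairs appear. Applying the triangle trick a final time delivers $\|\nabla v\|_{L^q}^q\le C\iint J(x,y)|u(x)-u(y)|^q\,dx\,dy$, which combined with the previous steps completes the proof. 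The main obstacle throughout is the near-field regime where \eqref{hypJ} gives no information; the triangle-through-an-annulus construction, applied twice (once to $w$ and once to $\nabla v$), is the central device used to overcome it, the smoothing by $\psi$ serving to restrict the bad region to $|x-y|\le 1$ and then to move it to the $\nabla v$ scale where the restriction $q>2\sigma$ makes the remaining singular integral converge.
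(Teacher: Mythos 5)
Your proof is correct, and it departs from the paper's argument in a way that is more than cosmetic. The paper bounds the two near--field pieces (the term $\iint\psi(x-z)|u(x)-u(z)|^q$ coming from $w$, and $I_{int}$ for the seminorm) by asserting pointwise comparisons of the form $\psi(x-z)\le C\,J(x,z)$ for $|x-z|<1$ (and analogously on $|x-z|<2$), which tacitly requires $J$ to be bounded away from zero on a neighbourhood of the diagonal; the standing hypotheses (positivity, continuity, \eqref{integrabilityJ}, \eqref{hypJ}) do not by themselves furnish such a uniform lower bound. Your triangle--through--an--annulus device sidesteps this: every near--field difference $|u(\xi)-u(\eta)|^q$ with $|\xi-\eta|\le1$ is averaged over $z\in A(\xi,\eta)$ with $|z-\xi|,|z-\eta|\in[3/2,7/2]$, so only the \emph{far--field} lower bound \eqref{hypJ} is ever invoked, and the Fubini swap you describe (bounded $y$--measure for fixed $(x,z)$) closes the estimate. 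This buys robustness for exactly the same conclusion and constants of the same quality. Your handling of the two halves of $[v]^q_{2\sigma/q,q}$ also differs from the paper's: on $|x-y|>1$ the paper pushes the seminorm of $v$ onto $u$ via Jensen and translation invariance, whereas you write $v=u-w$ and use integrability of $|z|^{-(n+2\sigma)}$ at infinity together with the already--established bound on $\|w\|_{L^q}$; on $|x-y|\le1$ the paper keeps $\psi(x-z)-\psi(y-z)$ and uses its Lipschitz bound, whereas you pass through the fundamental theorem of calculus to reduce to $\|\nabla v\|_{L^q}$, exploit $\int\nabla\psi=0$, and apply the annulus trick once more. Both routes use $q>2\sigma$ to make the same integral $\int_{|h|\le1}|h|^{q-n-2\sigma}\,dh$ converge. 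The steps are of comparable length; the genuinely new and valuable ingredient on your side is the annulus averaging, which removes an implicit near--diagonal lower bound on $J$ that the paper's version relies upon without stating.
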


\begin{proof} For the estimate concerning $w$, we have
\begin{eqnarray*}
\int_{\R^n} |w(x)|^q dx &=& \int_{\R^n} |u(x) - v(x)|^q dx \\
&=& \int_{\R^n} \Big{|} u(x) - \int_{\R^n} \psi(x - z)u(z)dz \Big{|}^q dx \\
&=& \int_{\R^n} \Big{|} \int_{\R^n} \psi(x - z) (u(x) - u(z))dz \Big{|}^q dx \\
&=& \int_{\R^n} \Big{|} \int_{\R^n} \psi(x - z)^{1/q'} \psi(x - z)^{1/q} (u(x) - u(z))dz \Big{|}^q dx.
\end{eqnarray*}

Applying Holder's inequality, we get
$$
\begin{array}{l}
\displaystyle \int_{\R^n} |w(x)|^q dx
\\[10pt]\displaystyle
\leq
\int_{\R^n} \Big{(} \int_{\R^n} \psi(x - z)dz \Big{)}^{q/q'} \Big{(} \int_{\R^n}\psi(x - z) |u(x) - u(z)|^q dz \Big{)} dx \\[10pt]\displaystyle
\leq C(q, q', \psi) \int_{\R^n} \int_{\R^n} \psi(x - z) |u(x) - u(z)|^q dz\, dx.
\end{array}
$$

Since $\psi$ is supported in $B_1$, clearly we have that for all $|x - z| \geq 1$, $\psi(x - z) \leq J(x, z)$. Meanwhile, since $J$ is positive, 
when $|x - z| < 1$ there exists a constant $C$ depending only on $|\psi|_\infty$ such that $\psi(x - z) \leq C J(x,z)$. Then
$$
\|w\|^q_{L^q(\R^n)} \leq C \iint_{\R^{2n}} J(x,y) |u(x) - u(y)|^q dx\, dy.
$$

Now we deal with the term concerning $v$. We split the fractional seminorm as
\begin{equation*}
\begin{split}
[v]_{2\sigma q^{-1}, q}^q
& = \ \iint_{|x - y| > 1} \frac{|v(x) - v(y)|^q}{|x - y|^{n + 2\sigma}}dxdy
+ \iint_{|x - y| \leq 1} \frac{|v(x) - v(y)|^q}{|x - y|^{n + 2\sigma}}dxdy \\
& =: \ I_{ext} + I_{int}.
\end{split}
\end{equation*}

We look at these integrals separately. For $I_{ext}$, using the definition of $v$ we have
\begin{equation*}
\begin{split}
I_{ext} = \iint_{|x - y| > 1} \Big{|} \int_{\R^n} (u(x - z) - u(y - z)) \psi(z)dz \Big{|}^q |x - y|^{-(n + 2\sigma)} dx dy.
\end{split}
\end{equation*}

Now, we can look at the measure $\mu(dz) = \psi(z)dz$ as a probability measure because of~\eqref{hyppsi}. Clearly, the function
$t \mapsto |t|^q$ is convex in $\R$. Then, we apply Jensen's inequality on the $dz-$integral in right-hand side of the last expression, concluding
\begin{equation*}
I_{ext} \leq \iint_{|x - y| > 1} \int_{\R^n} |u(x - z) - u(y - z)|^q \psi(z)dz |x - y|^{-(n + 2\sigma)} dx dy,
\end{equation*}
which, after an application of Fubini's Theorem, gives
\begin{equation*}
I_{ext} \leq  \int_{\R^n} \psi(z) \Big{(} \iint_{|x - y| > 1} |u(x - z) - u(y - z)|^q |x - y|^{-(n + 2\sigma)} dx dy \Big{)} dz,
\end{equation*}

Then, applying the change $\tilde{x} = x - z, \tilde{y} = y - z$ in the $dx dy$ integral and using~\eqref{hyppsi}, we conclude
\begin{equation*}
\begin{split}
I_{ext} \leq &
\int_{\R^n} \psi(z) \Big{(} \iint_{|\tilde{x} - \tilde{y}| > 1} |u(\tilde{x}) - u(\tilde{y})|^q
|\tilde{x} - \tilde{y}|^{-(n + 2\sigma)} d\tilde{x} d\tilde{y} \Big{)} dz  \\
= & \iint_{|\tilde{x} - \tilde{y}| > 1} |u(\tilde{x}) - u(\tilde{y})|^q
|\tilde{x} - \tilde{y}|^{-(n + 2\sigma)} d\tilde{x} d\tilde{y}.
\end{split}
\end{equation*}

Using this last expression, we obtain from the assumption~\eqref{hypJ} that
\begin{equation}\label{q=2-1}
I_{ext} \leq C \iint_{\R^{2n}} J(x,y) |u(x) - u(y)|^q dx\, dy.
\end{equation}

Now we deal with $I_{int}$. In this case, using the definition of $\psi$, we can write
\begin{equation}\label{q=2Iint}
I_{int} = \iint_{|x - y| < 1} \Big{|} \int_{\R^n} u(z) (\psi(x - z) - \psi(y - z)) dz \Big{|}^q |x - y|^{-(n + 2\sigma)} dx dy.
\end{equation}

Note that by using~\eqref{hyppsi}, we have for all $x, y \in \R^n$
\begin{equation*}
\int_{\R^n} u(x)(\psi(x - z) - \psi(y - z)) dz = u(x) \Big{(} \int_{\R^n} \psi(x - z)dz - \int_{\R^n} \psi(y - z)dz \Big{)} = 0,
\end{equation*}
and then
\begin{equation*}
\int_{\R^n} u(z) (\psi(x - z) - \psi(y - z)) dz = \int_{\R^n} (u(z) - u(x)) (\psi(x - z) - \psi(y - z)) dz.
\end{equation*}

Thus, using this equality into~\eqref{q=2Iint}, we get
\begin{equation*}
I_{int} = \iint_{|x - y| < 1} \Big{|} \int_{\R^n} (u(z) - u(x)) (\psi(x - z) - \psi(y - z)) dz \Big{|}^q |x - y|^{-(n + 2\sigma)} dx dy.
\end{equation*}

However, note that if $|x - z| \geq 2$ in the $dz$ integral, since $|x - y| < 1$ necessarily $|y - z| > 1$. Then, due to the fact that $\psi$ is
supported in the unit ball, the contribution of the integrand when $|x - z| \geq 2$ is null in the $dz$ integral. Taking this into account,
applying H\"older's inequality into the $dz-$integral, we have
\begin{equation*}
\begin{split}
& I_{int} \\
= & \iint_{|x - y| < 1} \Big{|} \int_{|x - z| < 2} (u(z) - u(x)) (\psi(x - z) - \psi(y - z)) dz \Big{|}^q \\
& \qquad \qquad \times
|x - y|^{-(n + 2\sigma)} dx dy \\
\leq & \iint_{|x - y| < 1} \Big{(} \int_{|x - z| < 2} |u(z) - u(x)|^q dz \Big{)} \\
& \qquad \qquad \times \Big{(}\int_{|x - \tilde{z}| < 2} |\psi(x - \tilde{z}) - \psi(y - \tilde{z})|^{q'} d\tilde{z} \Big{)}^{q/q'}
|x - y|^{-(n + 2\sigma)} dx dy,
\end{split}
\end{equation*}
where $q' = q/(q - 1)$ is the H\"older conjugate to $q$. By Fubini's Theorem we can write
\begin{equation*}
I_{int} = \int_{x \in \R^n} \Big{(} \int_{|x - z| < 2} (u(z) - u(x))^2 dz \Big{)} \Psi(x) dx,
\end{equation*}
where
\begin{equation*}
\Psi(x) = \int_{|x - y| < 1} \Big{(} \int_{|x - \tilde{z}| < 2} |\psi(x - \tilde{z}) - \psi(y - \tilde{z})|^{q'} d\tilde{z} \Big{)}^{q/q'}
|x - y|^{-(n + 2\sigma)} dy.
\end{equation*}

Using the regularity of $\psi$, we have
\begin{equation*}
\begin{split}
\Psi(x) \leq & \int_{|x - y| < 1} \Big{(} \int_{|x - \tilde{z}| < 2} ||D\psi||_\infty^{q'} |x - y|^{q'} d\tilde{z} \Big{)}^{q/q'}
|x - y|^{-(n + 2\sigma)} dy \\
\leq & ||D\psi||_\infty^{q} |B_2|^{q/q'} \int_{|x - y| < 1} |x - y|^q |x - y|^{-(n + 2\sigma)} dy,
\end{split}
\end{equation*}
and since $q > 2\sigma$, we conclude the last integral is convergent, obtaining
\begin{equation*}
\Psi(x) \leq C_{n, \sigma, q}||D\psi||_\infty^{q} |B_2|^{q/q'},
\end{equation*}
which leads us to the following estimate for $I_{int}$
\begin{equation*}
I_{int} \leq C \int_{x \in \R^n} \int_{|x - z| < 2} |u(z) - u(x)|^q dz dx.
\end{equation*}

From this, it is easy to get
\begin{equation*}
I_{int} \leq C \int_{|x - z| \leq 2} \frac{|u(z) - u(x)^q}{(1 + |x - z|)^{n + 2\sigma}} dz dx ,
\end{equation*}
which by the use of~\eqref{hypJ}, let us conclude that
\begin{equation*}
I_{int} \leq C \iint_{\R^{2n}} J(x,y) |u(x) - u(y)|^q dx\, dy.
\end{equation*}

This last estimate together with~\eqref{q=2-1} concludes the proof.
\end{proof}


\section{Proof of Theorem~\ref{teo}}

For simplicity, we use the following notation: for $q > 1$, we denote
$\tilde{\sigma} = \tilde{\sigma}_q = 2\sigma q^{-1}$.
Hence, the fractional critical exponent of $q$ relative to $\tilde{\sigma}$ is denoted by $\tilde{q}^\star$ and is given by
\begin{equation*}
\tilde{q}^\star = \frac{nq}{n - q \tilde{\sigma}} = \frac{nq}{n - 2\sigma}.
\end{equation*}

We need an intermediate result.
\begin{lema}\label{lemanormauLq}
Let us assume the hypotheses of Theorem \ref{existence} and Proposition~\ref{keyprop}. Then, there exists two constants $C_1, C_2 > 0$ depending on $n,q$ and $\sigma$ such that
\begin{equation}\label{ineqlemanormaLq}
||u(\cdot,t)||_{L^q(\R^n)}^q \leq C_1 \|u_{0}\|_{L^{1}(\R^{n})}^{q(1-\theta)} E_{J, q}(u)^\theta + C_2 E_{J, q}(u),
\end{equation}
where $E_{J,q}$ is defined in~\eqref{testingqbig} and $\theta \in (0,1)$ satisfies the equality
\begin{equation} \label{theta}
\frac{1}{q} = \frac{\theta}{\tilde{q}^\star} + (1 - \theta), \qquad \mbox{that is} \qquad
\theta = 1 - \frac{2\sigma}{n(q - 1) + 2\sigma}.
\end{equation}
\end{lema}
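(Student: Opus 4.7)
The plan is to combine the decomposition $u = v + w$ of~\eqref{defvw} with the key estimate of Proposition~\ref{keyprop} and a standard $L^p$-interpolation argument. I would start by writing $\|u\|_{L^q(\R^n)}^q \leq C_q\bigl(\|v\|_{L^q(\R^n)}^q + \|w\|_{L^q(\R^n)}^q\bigr)$ and treat the two pieces separately, because Proposition~\ref{keyprop} controls them in quite different ways.

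The $w$-piece is immediate: Proposition~\ref{keyprop} already yields $\|w\|_{L^q(\R^n)}^q \leq C E_{J,q}(u)$, which furnishes the second summand $C_2 E_{J,q}(u)$ on the right-hand side of~\eqref{ineqlemanormaLq}. The $v$-piece is where the interesting estimate happens. Since $v$ is a mollification, I would interpolate its $L^q$-norm between $L^1(\R^n)$ and the fractional Sobolev critical space $L^{\tilde{q}^\star}(\R^n)$:
\begin{equation*}
\|v\|_{L^q(\R^n)} \leq \|v\|_{L^1(\R^n)}^{1-\theta}\|v\|_{L^{\tilde{q}^\star}(\R^n)}^{\theta},
\end{equation*}
with $\theta$ determined by $1/q = (1-\theta) + \theta/\tilde{q}^\star$, which is exactly~\eqref{theta}. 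For the $L^1$-factor, Lemma~\ref{vw<u} (applied with exponent $1$) combined with the $L^1$-contraction from Theorem~\ref{existence} gives $\|v(\cdot,t)\|_{L^1(\R^n)} \leq \|u(\cdot,t)\|_{L^1(\R^n)} \leq \|u_0\|_{L^1(\R^n)}$. For the $L^{\tilde{q}^\star}$-factor, I would invoke the fractional Sobolev embedding~\eqref{sobolevineq} with regularity index $\tilde{\sigma} = 2\sigma/q$, which is legitimate because $\tilde{\sigma}q = 2\sigma < n$ under the standing hypothesis $n\geq 2$ and $\sigma\in(0,1)$; this yields $\|v\|_{L^{\tilde{q}^\star}(\R^n)}^q \leq C[v]_{\tilde{\sigma},q}^q$. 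Proposition~\ref{keyprop} then bounds $[v]_{\tilde{\sigma},q}^q \leq CE_{J,q}(u)$. Raising the interpolation inequality to the $q$-th power and assembling these bounds produces
\begin{equation*}
\|v\|_{L^q(\R^n)}^q \leq C\|u_0\|_{L^1(\R^n)}^{q(1-\theta)} E_{J,q}(u)^{\theta},
\end{equation*}
which is the first summand in~\eqref{ineqlemanormaLq}.

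There is no genuine analytic obstacle here beyond the machinery already developed in Sections~2 and~3; the step is essentially a packaging of the key estimate in Proposition~\ref{keyprop} together with Sobolev and interpolation. The main care required is bookkeeping: verifying algebraically that the $\theta$ extracted from the interpolation constraint agrees with $1 - 2\sigma/\bigl(n(q-1)+2\sigma\bigr)$ in~\eqref{theta}, checking the Sobolev admissibility $\tilde{\sigma}q < n$ (which is exactly where the hypothesis $n \geq 2$ enters), and ensuring that the resulting constants $C_1, C_2$ depend only on $n$, $q$ and $\sigma$, with the mollifier $\psi$ fixed once and for all at the start of Section~3. The constraint $q > 2\sigma$ from Proposition~\ref{keyprop} is inherited automatically.
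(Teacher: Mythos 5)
Your proof is correct and follows essentially the same route as the paper's: decompose $u = v + w$, bound the $w$-piece directly from Proposition~\ref{keyprop}, and for the $v$-piece interpolate between $L^1$ and $L^{\tilde{q}^\star}$, then invoke the fractional Sobolev embedding~\eqref{sobolevineq} together with Proposition~\ref{keyprop} and the $L^1$-contraction of Theorem~\ref{existence}. You even make explicit the Sobolev admissibility check $\tilde{\sigma}q = 2\sigma < n$ (which is where $n \geq 2$ enters), a detail the paper leaves tacit.
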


\noindent
{\bf Proof:} Using the definition of $v$ and $w$ in~\eqref{defvw}, we have
\begin{equation}\label{ineqlemanormaLq1}
||u||_{\Lq}^{q} \leq 2^{q-1}\big(||v||_{\Lq}^{q} + ||w||_{\Lq}^{q}\big)\,.
\end{equation}

By the definition of $v$, it belongs to $L^p$ for all $p$. Hence, we can interpolate, obtaining
\begin{equation*}
||v||_{\Lq} \leq ||v||_{L^{\tilde{q}^\star}(\R^n)}^\theta ||v||_{L^1(\R^n)}^{1 - \theta},
\end{equation*}
where $\theta$ is given by~\eqref{theta}. Recalling the Sobolev-type inequality~\eqref{sobolevineq}, we have
\begin{equation*}
||v||_{\Lq} \leq C[v]_{\tilde{\sigma}, q}^\theta ||v||_{L^1(\R^n)}^{1 - \theta},
\end{equation*}
and then, using this last inequality,~\eqref{ineqlemanormaLq1}, Proposition~\ref{keyprop} and the property
 $||u(\cdot, t)||_{L^1(\R^n)} \leq ||u_0||_{L^1(\R^n)}$ given in
Theorem~\ref{existence}, we get~\eqref{ineqlemanormaLq}.
\qed

\medskip
\noindent
{\bf Proof of Theorem~\ref{teo}.} Along this proof, we denote by $C_i$ a constant independent of $u_0$ and $K_i$ a constant depending on $u_0$
(making explicit this dependence). We recall that $q > 2\sigma$.
Consider the constants $C_1, C_2$ of the previous lemma and denote
\begin{equation}\label{K1yH}
K_1 = C_1 ||u_0||_{L^1(\R^n)}^{q(1 - \theta)}, \quad \mbox{and} \quad H(x) = K_1 x^\theta + C_2 x, \ \mbox{for} \ x \geq 0.
\end{equation}

With these definitions, inequality~\eqref{ineqlemanormaLq} can be written as
\begin{equation*}
||u||_{\Lq}^q \leq H(E_{J, q}(u)),
\end{equation*}
or equivalently
\begin{equation*}
H^{-1}(||u||_{\Lq}^q) \leq E_{J, q}(u).
\end{equation*}

Hence, denoting $\phi(t)=\phi[u](t):= ||u(\cdot, t)||_{\Lq}^q$, we use~\eqref{testingqbig} to conclude $\phi$ satisfies the differential inequality
\begin{equation}\label{diffineq0}
\partial_t \phi(t) + C_q H^{-1}(\phi(t)) \leq 0, \quad \mbox{for} \ t > 0.
\end{equation}

Direct computations allows us to write
\begin{equation*}
H^{-1}(x) \geq  \left \{ \begin{array}{ll} x/(2C_2) & \quad x \geq K_2, \\ 
(x/(2 K_1))^{1/\theta} & \quad x < K_2, \end{array} \right. \,
\end{equation*}
where $K_2 = C_3 ||u_0||_{L^1(\R^n)}^q$ for some constant $C_3$. Using this expression, we obtain from~\eqref{diffineq0} that, for all $t > 0$, 
$\phi(t)$ satisfies
\begin{equation}\label{ineqteoCaseI}
\partial_t \phi(t) + C \Big{(} \phi(t) \ \mathbf{1}_{\{ \phi(t) \geq K_2 \}} 
+ ||u_0||_{L^1(\R^n)}^{q(1 - \theta^{-1})}\phi(t)^{1/\theta} \mathbf{1}_{\{ \phi(t) < K_2 \}} \Big{)} \leq 0,
\end{equation}

We claim that there exists a $t_0 > 0$ such that $\phi(t_0) < K_2$. Otherwise, if $\phi(t) \geq K_2$, from~\eqref{ineqteoCaseI} we have
$
\partial_t \phi(t) + C \phi(t) \leq 0
$
holds for all $t > 0$, implying that $\phi$ has exponential decay, which is a contradiction with $\phi(t)\geq K_2>0$.
Hence, since there exists such $t_0$ and since \eqref{testingqbig}
implies in particular that $\phi$ is nonincreasing,
from~\eqref{ineqteoCaseI} we conclude that for all $t > t_0$,
\begin{equation*}
\partial_t \phi(t) + C \|u_{0}\|_{L^{1}(\R^{n})}^{(1-\theta)/\theta} (\phi(t))^{1/\theta} \leq 0\,.
\end{equation*}

From this we get
$$
\phi(t) \leq C \|u_{0}\|_{L^{1}(\R^{n})} t^{\theta/(\theta - 1)}\,,
$$
again for $C$ depending on $n,q$ and $\sigma$.
This proves that the $L^q$-norm of the solution goes to zero with the desired rate, that is, \eqref{Lqdecay.11} holds.

Now we look for a bound valid for all $t>0$, that is~\eqref{Lqdecay}. If $\phi(0) \leq K_2$, that is when
$||u_0||_{\Lq} \leq C_3 ||u_0||_{L^1(\R^n)}$, by~\eqref{ineqteoCaseI} and since $\phi$ is nonincreasing, we get
\begin{equation}\label{diffineq1}
\partial_t \phi(t) + C ||u_0||_{L^1(\R^n)}^{q(1 - \theta^{-1})}\phi(t)^{1/\theta} \leq 0 
\end{equation}
for all $t > 0$. Integrating from $0$ to $t$, using the expression~\eqref{theta} for $\theta$ and recalling $\phi(t) = ||u(t)||_{\Lq}^q$, we conclude
\begin{equation*}
||u(\cdot, t)||_{\Lq} \leq C ||u_0||_{L^1(\R^n)} t^{- \frac{n}{2\sigma}(1 - q^{-1})},
\end{equation*}
where the constant $C$ does not depend on $u_0$.

In the case $\phi(0) > K_2$, the corresponding inequality $\phi(0) > C_3 ||u_0||_{L^1(\R^n)}^q$ allows us to replace the dependence on 
$||u_0||_{L^1(\R^n)}$ of $H$ in~\eqref{K1yH} by the norm $||u_0||_{\Lq}$. Thus, this replacement makes the analogous inequality 
to~\eqref{diffineq0} that in this case reads as
\begin{equation}\label{diffineq2}
\partial_t \phi(t) + C_q \tilde{H}^{-1}(\phi(t)) \leq 0, \quad \mbox{for} \ t > 0. 
\end{equation}
where, for some $C_3 > 0$, we have denoted
\begin{equation*}
\tilde{H}(x) = C_3 ||u_0||_{\Lq}^{q(1 - \theta)} x^\theta + C_2 x.
\end{equation*}

Since $\phi$ is nondecreasing our interest is to estimate $\tilde{H}^{-1}(x)$ in~\eqref{diffineq2} from below in the interval $x \in (0, \phi(0))$.
It is easy to see that for all $C$ small enough, independent of $u_0$ we can get
\begin{equation*}
\tilde{H}(C ||u_0||_{\Lq}^{q(1 - \theta^{-1})} x^{1/\theta}) \leq x, \quad \mbox{for all} \ x \in (0,\phi(0)),
\end{equation*}
which means that
\begin{equation*}
C ||u_0||_{\Lq}^{q(1 - \theta^{-1})} x^{1/\theta} \leq \tilde{H}(x), \quad \mbox{for all} \ x \in (0,\phi(0)).
\end{equation*}

Replacing this inequality into~\eqref{diffineq2} we obtain that
\begin{equation*}
\partial_t \phi(t) + C ||u_0||_{\Lq}^{q(1 - \theta^{-1})} \phi(t)^{1/\theta} \leq 0
\end{equation*}
holds for all $t > 0$, where $C> 0$ is independent of $u_0$. This inequality is exactly the same as~\eqref{diffineq1} but with the $L^q-$norm 
of $u_0$ replacing the $L^1-$norm. Integrating, we conclude the estimate
\begin{equation*}
||u(\cdot, t)||_{\Lq} \leq C ||u_0||_{L^q(\R^n)} t^{- \frac{n}{2\sigma}(1 - q^{-1})},
\end{equation*}
and therefore inequality~\eqref{Lqdecay} holds.
\qed

\medskip
\noindent
{\bf Proof of Corollary~\ref{cor}:}
By an interpolation argument, there exists $\theta \in (0,1)$  such that
\begin{equation*}
\begin{split}
||u(\cdot, t)||_{\Lq} \leq & ||u(\cdot, t)||_{L^1(\R^n)}^\theta ||u(\cdot, t)||_{L^r(\R^n)}^{1 - \theta} \\
\leq & C ||u_0||_{L^1(\R^n)}^\theta \max \{ \|u_{0}\|_{L^{1}(\R^{n})}, 
\|u_{0}\|_{L^{r}(\R^{n})} \}^{1 - \theta} t^{-\frac{n}{2\sigma} (1 - \frac{1}{q})},
\end{split}
\end{equation*}
where we have used Theorem~\ref{teo} in the last inequality.
\qed

\section{Appendix}
\noindent
{\bf Proof of Inequality~\eqref{desigualdad1}:} Clearly the inequality holds if $a = b$, then, without loss of generality, we assume $|a| > |b|$.
If this is the case,~\eqref{desigualdad1} is equivalent to
\begin{equation*}
(1 - x)(1 - |x|^{q-2}x) \geq C|1 - x|^q
\end{equation*}
for all $|x| < 1$. Now, in the case $x < 0$ we have the left-hand side is bounded from above by 1, meanwhile the right-hand side is bounded from
above by $2^q$. Hence, it is sufficient to consider $C = 2^{-q}$. For the case $x \in (0,1)$, the desired inequality is equivalent to
\begin{equation*}
1 \geq x^{q - 1} + C(1 - x)^{q-1}.
\end{equation*}

When $q \geq 2$, this inequality holds with any $C \leq 1$ because
$$
1 \geq x + (1 - x) \geq x^{q - 1} + (1 - x)^{q - 1}.
$$

When $q \in (1, 2)$, the conclusion follows from the fact that the derivative of $x \mapsto x^{q - 1} + C(1 - x)^{q-1}$ is strictly positive 
(independently of $C$) at $x = 1$. Then, choosing $C > 0$ small enough we obtain the result.
\qed

\medskip

\noindent
{\bf Aknowledgements:} J.D.R. was partially supported by MEC MTM2010-18128 and MTM2011-27998 (Spain) and FONDECYT 1110291 International Coperation. 
P.F. was partially supported by 
FONDECYT 1110291 and Programa Basal-CMM U. de Chile.
E.T. was partially supported by CONICYT, Grants Capital Humano Avanzado, Realizaci\'on de Tesis Doctoral and Cotutela en el 
Extranjero.


\begin{thebibliography}{00}

\bibitem{libro} Andreu-Vaillo, F.; Mazon, J. M.; Rossi, J. D. and Toledo-Melero, J. J. Nonlocal diffusion problems. Mathematical Surveys and Monographs, 165. American Mathematical Society, Providence, RI; 2010.

\bibitem{Chasseigne-Chaves-Rossi}
Chasseigne, E. , Chaves, M. and Rossi, J.D. {\em Asymptotic Behavior for Nonlocal Diffusion Equations.}
 J. Math. Pures Appl., (9) 86, (2006), no. 3, 271--291.

\bibitem{Hitch}
Di Neza, E., Palatucci, G. and Valdinoci, E. {\em Hitchhiker's Guide to the Fractional Sobolev Spaces.}
 Bull. Sci. Math., 136, (2012), no. 5, 521--573.

\bibitem{Ignat-Rossi}
Ignat, L. and Rossi, J.D. {\em Decay Estimates for Nonlocal Problems Via Energy Methods.}
 J. Math. Pures Appl., (9) 92, (2009), no. 2, 163--187.


\end{thebibliography}
\end{document}